\title{Quadratic Conorm and extremally rich JB$^*$-triples}
\author[F.B. Jamjoom]{Fatmah B. Jamjoom}
\address{Department of Mathematics, College of Science, King Saud University, P.O.Box 2455-5, Riyadh-11451, Kingdom of Saudi Arabia.}
\email{fjamjoom@ksu.edu.sa}
\author[A.M. Peralta]{Antonio M. Peralta}
\address{Departamento de An{\'a}lisis Matem{\'a}tico, Universidad de Granada,\\
Facultad de Ciencias 18071, Granada, Spain}
\curraddr{Visiting Professor at Department of Mathematics, College of Science, King Saud University, P.O.Box 2455-5, Riyadh-11451, Kingdom of Saudi Arabia.}
\email{aperalta@ugr.es}
\author[A.A. Siddiqui]{Akhlaq A. Siddiqui}
\address{Department of Mathematics, College of Science, King Saud University, P.O.Box 2455-5, Riyadh-11451, Kingdom of Saudi Arabia.}
\email{asiddiqui@ksu.edu.sa}
\author[H.M. Tahlawi]{Haifa M. Tahlawi}
\address{Department of Mathematics, College of Science, King Saud University, P.O.Box 2455-5, Riyadh-11451, Kingdom of Saudi Arabia.}
\email{htahlawi@ksu.edu.sa}
\keywords{Extremally rich JB$^*$-triple, Brown-Pedersen quasi-invertibility, reduced minimum modulus, conorm, quadratic conorm}
\subjclass{Primary 46L70; 17C65; 46L05  Secondary 15A09 47A05 47D25 }
\begin{document}

\thanks{The authors extend their appreciation to the Deanship of Scientific Research at King Saud University for funding this work through research group no RG-1435-020. Second author also partially supported by the Spanish Ministry of Economy and Competitiveness, D.G.I. project no. MTM2011-23843.}

\begin{abstract} We introduce and study the class of extremally rich JB$^*$-triples. We establish new results to determine the distance from an element $a$ in an extremally rich JB$^*$-triple $E$ to the set $\partial_{e} (E_1)$ of all extreme points of the closed unit ball of $E$. More concretely, we prove that $$\hbox{dist} (a,\partial_e (E_1)) =\max \left\{ 1 , \|a\|-1\right\},$$ for every $a\in E$ which is not Brown-Pedersen quasi-invertible. As a consequence,
we determine the form of the $\lambda$-function of Aron and Lohman on the open unit ball of an extremally rich JB$^*$-triple $E$, by showing that $\lambda (a)= \frac12$ for every non-BP quasi-invertible element $a$ in the open unit ball of $E$. We also prove that for an extremally rich JB$^*$-triple $E$, the quadratic connorm $\gamma^{q}(.)$ is continuous at a point $a\in E$ if, and only if, either $a$ is not von Neumann regular {\rm(}i.e. $\gamma^{q}(a)=0${\rm)} or $a$ is Brown-Pedersen quasi-invertible. \end{abstract}

\maketitle

\newtheorem{thm}{Theorem}[section]
 \newtheorem{corollary}[thm]{Corollary}
 \newtheorem{lemma}[thm]{Lemma}
 \newtheorem{prop}[thm]{Proposition}
 \theoremstyle{definition}
 \newtheorem{defn}[thm]{Definition}
 \theoremstyle{remark}
 \newtheorem{remark}[thm]{Remark}
 \newtheorem*{Example}{Example}
 \numberwithin{equation}{section}
\newcommand{\vx}{{\mathcal V}(x)}
\newcommand{\sx}{{\mathcal S}(x)}

\section{Introduction}

This paper presents new investigations which provide some answers to problems concerning the geometric structure of those complex Banach spaces included in the class of JB$^*$-triples. W. Kaup proved in 1983 that the open unit ball of a complex Banach space $X$ is a bounded symmetric domain (a pure holomorphic property) if, and only, if $X$ is a JB$^*$-triple (cf. \cite{Ka83}). That is, the holomorphic properties of the open unit ball of $X$ determine when $X$ satisfies certain algebraic-geometric axioms listed below. L. Harris proved in \cite{Harris74} that the open unit ball of a C$^*$-algebra $A$ is a bounded symmetric domain; actually $A$ is a JB$^*$-triple with triple product defined by \begin{equation}\label{eq triple product C*} \{x,y,z\}: =\frac12 (xy^*z+zy^* x).
\end{equation}

JB$^*$-triples have been intensively studied during the last three decades, and special attention has been paid to the geometric properties of these spaces. In several cases the studies determine whether JB$^*$-triples satisfy certain properties fulfilled by C$^*$-algebras. For example, the quasi-invertible elements of C$^*$-algebras studied by L.G. Brown and G.K. Pedersen in \cite{BP95} gave rise to the introduction of Brown-Pedersen quasi-invertible elements in a  JB$^*$-triple in \cite{SiTah2011} and \cite{TahSiddJam2013}. Brown and Pedersen show in \cite{BP97} that quasi-invertible elements in C$^*$-algebras play a crucial role to determine the form  of the $\lambda$-function of R. Aron and R. Lohman \cite{AronLohman87}. The precise description of the $\lambda$-function is determined in \cite{BP97}. A C$^*$-algebra $A$ is said to be extremally rich if the set $A^{-1}_q$ of quasi-invertible elements in $A$ is norm-dense in $A$ (cf. \cite[\S 3]{BP95}). The class of extremally rich C$^*$-algebras is strictly bigger than the class of von Neumann algebras. From the geometric point of view, a unital C$^*$-algebra $A$ is extremally rich if and only if it has the (uniform) $\lambda$-property, that is, the infimum of the values of the $\lambda$-function on the closed unit ball of $A$ is bigger than zero (cf. \cite[\S 3]{BP95} and \cite[Theorem 3.7]{BP97}).\smallskip

In a recent paper, we proved that every JBW$^*$-triple (i.e. a JB$^*$-triple which is also a dual Banach space) satisfies the uniform $\lambda$-property (cf. \cite{JamPerSiTah2015}). In the same paper we determine the $\lambda$-function of the closed unit ball of a JBW$^*$-triple, and on the set $E_q^{-1}$ of all Brown-Pedersen quasi-invertible elements in the closed unit ball of a general JB$^*$-triple $E$. If we also assume that the set $\partial_{e} (E_1)$ of all extreme points in the closed unit ball $E_1$ of $E$ is non-empty, we can only prove that the inequality \begin{equation}\label{eq 0 inequality lambda 3.6} \lambda (a)\leq \frac12 (1-\alpha_q (a)),
 \end{equation} holds for every $a\in E_1\backslash E_q^{-1}$, where $\alpha_q (a)$ is the distance from $a$ to $E_q^{-1}$ (cf. \cite[Corollary 3.7]{JamPerSiTah2015}).\smallskip

The question whether in \eqref{eq 0 inequality lambda 3.6} the inequality sign can be replaced with an equality symbol, is one of the main open problems in the setting of JB$^*$-triples. This question is deeply related to the problem of determining the distance from an element $a$ to the set $\partial_{e} (E_1)$. The best estimation follows from Theorem 3.6 in \cite{JamPerSiTah2015}, where it is established that for every JB$^*$-triple  $E$ with $\partial_{e} (E_1)\neq  \emptyset,$ the inequalities $$1+\|a\|\geq \hbox{dist} (a, \partial_{e} (E_1)) \geq \max \left\{ 1+ \alpha_q (a) ,  \|a\|-1  \right\},$$ hold for every $a$ in $E\backslash E_q^{-1}.$
\smallskip

We introduce, in this paper, the notion of extremally rich JB$^*$-triple with the aim of studying the above problems more in deep. We say that a JB$^*$-triple $E$ is extremally rich if the set of Brown-Pedersen quasi-invertible elements in $E$ is norm dense in $E$. Several characterizations of extremally rich JB$^*$-triples are provided in Proposition \ref{p charact of extremally rich}. Among the new results in this paper, we prove that for an extremally rich JB$^*$-triple we have $$\hbox{dist} (a,\partial_e (E_1)) =\max \left\{ 1 , \|a\|-1\right\},$$ for every $a\in E\backslash E_q^{-1}$ (see Theorem \ref{thm extremally rich JB*-triples satisfy lambda property on the open unit ball}). As a consequence, we show that $\lambda (a)= \frac12$ for every non-BP quasi-invertible element $a$ in the open unit ball of an extremally rich JB$^*$-triple (see Corollary \ref{c lambda function on the open unit ball of a extremally rich}).\smallskip

We also deal with another related, open question. We recall that the \emph{reduced minimum modulus} of a non-zero bounded linear or conjugate linear operator $T$ between two normed spaces $X$ and $Y$ is defined by
\begin{equation}\label{red min mod} \gamma (T) := \inf \{ \|T(x)\| : \hbox{dist}(x,\ker(T)) \geq 1 \}.\end{equation} Following  \cite{Ka}, we set
$\gamma (0) = \infty$. When $X$ and $Y$ are Banach spaces, we have $\gamma (T) >0
\Leftrightarrow T(X)$  is norm closed,  (cf. \cite[Theorem IV.5.2]{Ka}).\smallskip

The quadratic-conorm, $\gamma^{q} (a),$ of an element $a$ in a JB$^*$-triple $E$ is defined as the reduced minimum modulus of the conjugate linear operator $Q(a): E\to E,$ $x\mapsto Q(a) (x) := \{a,x,a\}$, that is, $\gamma^{q} (a) = \gamma (Q(a))$ (see \cite{BurKaMoPeRa}). Theorem 3.13 in \cite{BurKaMoPeRa} proves that $\gamma^{q}(.)$ is upper semi-continuous on $E\backslash \{0\}$. It is also remarked, in the just quoted reference, that the continuity points of $\gamma^{q} (.)$ are, in general, unknown. In this paper we throw some light into the question of determining the continuity points of the quadratic conorm, by showing that for an extremally rich JB$^*$-triple $E$, the quadratic connorm $\gamma^{q}(.)$ is continuous at a point $a\in E$ if, and only if, either $a$ is not von Neumann regular {\rm(}i.e. $\gamma^{q}(a)=0${\rm)} or $a$ is BP quasi-invertible (see Theorem \ref{t characterization of continuity points of conorm}). We also explore the applications of this result to determine the continuity points of the conorm of an extremally rich C$^*$-algebra in the sense introduced by R. Harte and M. Mbekhta in \cite{HarMb2}.

\subsection{Preliminaries}

A complex Banach space $E$ is a JB$^*$-triple if it can be equipped with a triple product $\{.,.,.\} : E\times E\times E \to E$, $(x,y,z)\mapsto \{x,y,z\},$ which is linear and symmetric in $x$ and $z$ and conjugate linear in $y$, and satisfies the following axioms:\begin{enumerate}[$(a)$] \item (Jordan identity)
$$\{x,y,\{a,b,c\}\} = \{\{x,y,a\},b,c\} - \{a,\{y,x,b\},c\} + \{a,b,\{xyc\}\},$$ for every $a,b,c\in E$;
\item For each $a\in E$, the operator $x\mapsto L(a,a) (x):= \{a,a,x\}$ is hermitian with non-negative spectrum;
\item $\|\{x,x,x\}\| = \|x\|^{3}$, for all $x\in E$.
\end{enumerate}

The class of JB$^*$-triples includes all C$^*$-algebras, all complex Hilbert spaces, and all spin factors. It is further known that every JB$^{*}$-algebra is a JB$^*$-triple with the triple product $\{x,y,z\} := (x{\circ}y^{*}){\circ}z-(x{\circ}z){\circ}y^{*}+(y^{*}{\circ}z){\circ}x$.\smallskip

A \emph{JBW$^*$-triple} is a JB$^*$-triple which is also a dual Banach space. The the second dual $E^{**}$ of a JB$^*$-triple $E$ is JBW$^*$-triple (cf. \cite{Di86b}). Every JBW$^*$-triple $W$ admits a unique isometric predual $W_*$, and its triple product is separately $\sigma(W,W_*)$-continuous (cf. \cite{BarTi}).\smallskip

JB$^*$-triples are stable by $\ell_{\infty}$-sums (cf. \cite[page 523]{Ka83}), that is, if $(E_j)$ is a family of JB$^*$-triples, then the $\ell_{\infty}$-sum $\displaystyle \oplus_{j}^{\infty} E_j$ is a JB$^*$-triple with respect to the product $$\{(a_j),(b_j),(c_j)\}:= (\{a_j,b_j,c_j\}).$$

Following standard notation, given two elements $x,y$ in a JB$^*$-triple $E$, the conjugate linear operator $Q(x,y): E\to E$ is defined by $Q(x,y)z:=\{x,z,y\}$ for all $z\in E$; we usually write $Q(x)$ instead of $Q(x,x)$. The symbol $L(x,y)$ will stand for the linear operator on $E$ given by $L(x,y) (z) = \{x,y,z\}$.\smallskip

We recall that an element $e$ in a JB$^*$-triple $E$ is said to be a \emph{tripotent} if $\{e,e,e\} =e$. It is known that, for each tripotent $e$ in $E$, the eigenvalues of the operator $L(e,e)$ are contained in the set $\{0, \frac12, 1\}$, and $E$ decomposes in the form $$ E=
 E_{2} (e) \oplus  E_{1} (e) \oplus  E_0 (e),$$ where, for $i=0,1,2,$  $E_i (e)$ is the $\frac{i}{2}$-eigenspace of $L(e,e)$. This decomposition is called the \emph{Peirce decomposition} of $E$ with respect to $e$. The Peirce subspaces appearing in the above decomposition satisfy certain multiplication rules (called Peirce rules), which can be stated as follows: $$\{ E_{i}(e), E_{j} (e), E_{k} (e)\}\subseteq E_{i-j+k} (e)$$ if $i-j+k \in \{ 0,1,2\}$ and is zero otherwise. In addition, $$ \{E_{2}(e), E_{0}(e), E\}=0.$$ The projection of $E$ onto $E_{k} (e)$ is denoted by $P_{k_{}}(e)$, and it is called the Peirce $k$-projection. Peirce projections are contractive (cf. \cite{FriRu85}) and satisfy that $P_{2}(e) = Q(e)^2,$ $P_{1}(e) =2(L(e,e)-Q(e)^2),$ and $P_{0}(e) =Id_E - 2 L(e,e) + Q(e)^2.$ A tripotent $e$ in $E$ is said to be \emph{unitary} if $L(e,e)$ coincides with the identity map on $E,$ that is, $E_2 (e) = E$. If $E_0 (e) =\{0\}$ we say that $e$ is \emph{complete}.\smallskip

The Peirce space $E_2 (e)$ is a unital JB$^*$-algebra with unit $e$, product $x\circ_e y := \{x,e,y\}$ and involution $x^{*_e} := \{ e,x,e\}$, respectively.
Furthermore, the triple product on $E_2 (e)$ is given by $$\{ a,b,c\} = (a \circ_e b^{*_e}) \circ_e c + (c\circ_e b^{*_e}) \circ_e a - (a\circ_e c)
\circ_e b^{*_e} \ (a,b,c\in E_{2} (e)).$$

Let $a$ be an element in a JB$^*$-triple $E$, and let $E_a$ denote the JB$^*$-subtriple of $E$ generated by $a$. That is, $E_a$ coincides with the closed linear span of the elements $a$, $a^{[3]}= \{a,a,a\}$, $a^{[2n+1]}:= \{a,a,a^{[2n-1]}\}$ ($n\geq 2$). It follows from the \emph{commutative Gelfand theory} that there exist a locally compact Hausdorff space $L_{a}\subseteq (0,\|a\|],$ with $L_a \cup \{0\}$ compact, and a triple isomorphism $\Psi_a : E_a \to C_0(L_a)$, where $C_0 (\Omega_{x})$ denotes the Banach space of all complex-valued continuous functions vanishing at $0,$ such that $\Psi_a (a) (t) = t,$ $\forall t\in L_a$ (cf. \cite[\S 1, Corollary 1.15]{Ka83}). Therefore, for each natural $n$, there exists a unique $a^{[{1}/({2n-1})]}\in E_a$ satisfying $(a^{[{1}/({2n-1})]})^{[2n-1]} = a$. The sequence $(a^{[{1}/({2n-1})]})$ need not be convergent in the norm topology of $E$. However, when $a$ is an element in a JBW$^*$-triple $W$, the
sequence $(a^{[{1}/({2n-1})]})$ converges in the weak$^*$
topology of $W$ to a tripotent in $W$, which is denoted by $r(a)$ and
is called the \emph{range tripotent} of $a$. The tripotent $r(a)$ is
the smallest tripotent $e$ in $W$ satisfying that $a$ is positive in
the JBW$^*$-algebra $W_{2} (e)$ (cf. \cite[\S 3, Lemma~3.2]{EdFerHosPe2010}). \smallskip

The deep geometric-algebraic connections appearing in the setting of JB$^*$-triples materialize in many important properties, one of them ensures that complete tripotents in a JB$^*$-triple $E$ coincide with the extreme points of its closed unit ball (cf. \cite[Lemma 4.1]{BraKaUp78} and \cite[Proposition 3.5]{KaUp77} or \cite[Theorem 3.2.3]{Chu2012}). Throughout this paper, the set of all the extreme points of the closed unit ball of a Banach space $X$ is denoted by $\partial (X_1)$.\smallskip

The Bergman operator, $B(x,y)$, associated with a couple of elements $x,y\in E$ is the mapping defined by $B(x,y):=I_{E}- 2L(x,y)+ Q(x)Q(y)$, where $I_{E}$ is the identity operator on $E$. We observe that, for a tripotent $e\in E$, $B(e,e) = P_0(e)$. According to \cite{TahSiddJam2013}, an element $x$ in a JB$^*$-triple ${E}$ is called \emph{Brown-Pedersen quasi-invertible} (BP quasi-invertible for short) if there exists $y\in E$ satisfying $B(x,y)=0$. We say that $y$ is a BP quasi-inverse of $x$. It is known that $B(x,y)=0\Rightarrow B(y,x)=0$. A BP quasi-invertible element need not admit a unique BP quasi-inverse. It is established in \cite{TahSiddJam2013} that an element $x$ in $E$ is BP quasi-invertible if and only if there exits a complete tripotent $v\in E$ ($v\in\partial_e (E_1)$)  such that $x$ is positive and invertible in the Peirce $2$-space $E_{2}(v)$. Therefore, the set $E^{-1}_{q}$ of all BP quasi-invertible elements in $E$ contains all extreme points of the closed unit ball of $E$. When ${E}=\mathcal{J}$ is a JB$^*$-algebra, $\mathcal{J}^{-1}_{q}$ contains the set $\mathcal{J}^{-1}$ of all invertible elements in ${E}$.\smallskip

When a C$^*$-algebra $A$ is regarded as a JB$^*$-triple, BP quasi-invertible elements in $A$ are precisely the quasi-invertible elements of $A$ in the sense defined by Brown and Pedersen in \cite{BP95}.\smallskip

We also recall that an element $a$ in a JB$^*$-triple $E$ is called \emph{von Neumann regular} if and only if there exists $b\in E$ such that $Q(a)b =a,$ $Q(b)a =b$ and $[Q(a),Q(b)]:=Q(a)\,Q(b) - Q(b)\, Q(a)=0$ (cf.  \cite[Lemma 4.1]{Ka96}). For a von Neumann regular element $a$, there might exist many elements $c$ in $E$ such that $Q(a)c =a$. However, there exists a unique element $b \in E$ (denoted by $a^{\dag}$) satisfying $Q(a)b =a,$ $Q(b)a =b$ and $[Q(a),Q(b)]=0$, this unique element $b$ is called the \emph{generalized inverse} of $a$ in $E$.  For an element $a$ in a JB$^*$-triple $E$, we can consider the range tripotent, $r(a),$ of $a$ in $E^{**}$. It is known that $a$ is von Neumann regular if, and only if, $r(a)\in E$ and $a$ is positive and invertible in $E_2 (r(a))$ (cf. \cite[\S \S 2, pages 191 and 192]{BurKaMoPeRa}).

\section{Extremally Rich JB$^*$-triples}

In \cite[\S 3]{BP95} L. Brown and G.K. Pedersen introduced and studied the class of extremally rich C$^*$-algebras. We recall that a unital C$^*$-algebra $A$ is \emph{extremally rich} if the set $A_q^{-1}$ of Brown-Pedersen quasi-invertible elements in $A$ is (norm) dense in $A$. When $A$ is non-unital, it is extremally rich if its unitization enjoys this property. Every von Neumann algebra and every purely infinite (simple) C$^*$-algebra is extremally rich (cf. \cite[\S 3]{BP95}). From the point of view of Banach space theory, a unital C$^*$-algebra is extremally rich if and only if it has the (uniform) $\lambda$-property defined by R. Aron and R. Lohman in \cite{AronLohman87} (cf. \cite[\S 3]{BP95} and \cite[Theorem 3.7]{BP97}).\smallskip

We recall that, given a normed space $X$, $x,y \in X$, with $\|y\|\leq 1$, $e\in \partial_e (X_1)$, and $0<\lambda \leq 1,$ the ordered $3$-tuple $(e, y, \lambda)$ is said to be \emph{amenable} to $x$ if $x= \lambda e + (1-\lambda) y$. The $\lambda$-function is defined by $$\lambda (x) := \sup \{\lambda : (e, y, \lambda) \hbox{ is a $3$-tuple amenable to } x\}.$$ The space $X$ is said to have the $\lambda$-property if each element in its closed unit ball admits an amenable $3$-tuple (cf. \cite{AronLohman87}).\smallskip

The notion of Brown-Pedersen quasi-invertibility in JB$^*$-triples have been recently studied in \cite{SiTah2011}, \cite{TahSiddJam2013} and \cite{TahSiddJam2014}. The study of the $\lambda$-function in JB$^*$-triples is developed in \cite{TahSiddJam2014} and \cite{JamPerSiTah2015}, where we proved that every JBW$^*$-triple (i.e. a JB$^*$-triple which is a dual Banach space) satisfies the (uniform) $\lambda$-property. We introduce the following definition with the aim of determining those JB$^*$-triples satisfying the (uniform) $\lambda$-property.

\begin{defn}\label{D:extremrich} A JB$^*$-triple $E$ is called \emph{extremally rich} if the set $E^{-1}_{q}$ of BP quasi-invertible elements in $E$ is norm dense in $E$.
\end{defn}

Recall that an element $u$ in a unital JB$^*$-algebra $\mathcal{J}$ is called \emph{unitary} if $u^{*} = u^{-1}$ (where $u^{-1}$ denotes the inverse of $u$), or equivalently, if $\{u,u,z\} = z$, $\forall z \in \mathcal{J}$ (cf. \cite[Definition 19.12]{Up} or \cite[Definition 4.1.53, Proposition 4.1.54 4.1.55]{CabRod2014}), that is, $L(u,u) = I_{\mathcal{J}}$ (the identity operator over $\mathcal{J}$).

\begin{remark}\label{r tsr1} $(a)$ We recall that a unital C$^*$-algebra $A$ is of \emph{topological stable rank 1}  (tsr $1$) if the subgroup $A^{-1}$ of invertible elements in $A$ is norm dense in $A$ (see \cite{Rieffel83}). A similar definition is introduced in the category of JB$^*$-algebras in \cite{S1}.\smallskip

If $\mathcal{J}$ is a JB$^*$-algebra of tsr $1$, then $\mathcal{J} = \overline{\mathcal{J}^{-1}} \subseteq \overline{\mathcal{J}^{-1}_{q}} $. This shows that every JB$^*$-algebra $\mathcal{J}$ of $tsr\  1$ is extremally rich. There exist examples of extremally rich C$^*$-algebras which are not of tsr $1$. For example, suppose $A$ is a von Neumann algebra that contains a non-unitary, maximal partial isometry, say $v$, which is a non-unitary extreme point of $A_{1}$.
Then, $v\in \partial_e(A_{1}) \neq \mathcal{U}(A),$ which implies that $A$ is not of tsr $1$ (cf. \cite[Corollary 6.10]{S1}). On the other hand, every von Neumann algebra is extremally rich (see \cite[p.126]{BP95}).\smallskip

$(b)$ It should be also noted that the von Neumann envelope of a JB$^*$-algebra of tsr $1$ need not be, in general, of tsr $1$ (cf. \cite[Theorems 3.1 and 3.2]{S1}).\smallskip

$(c)$ Let $A$ be a C$^*$-algebra. Then $A$ is extremally rich as a C$^*$-algebra if and only if $A$ is extremally rich when it is regarded as a JB$^*$-triple with the product in \eqref{eq triple product C*}.
\end{remark}

Since the class of extremally rich C$^*$-algebras is strictly bigger than the class of von Neumann algebras, we can immediately confirm that the class of extremally rich JB$^*$-triples is agreeably large, strictly bigger than the class of JBW$^*$-triples.\smallskip

In our next result we shall establish some characterizations of extremally rich JB$^*$-triples in the line set down by Brown and Pedersen for C$^*$-algebras in \cite[Theorem 3.3]{BP95}. To this end, we shall recall a result taken from \cite[Proposition 4.4]{JamPerSiTah2015}. First, we recall that for each element $a$ in a JB$^*$-triple $E$, $m_q (a):=\hbox{dist} (a,E\backslash E_q^{-1})$ coincides with the square root of the quadratic conorm of $a$, whenever $a$ is in $E_q^{-1}$ (see \cite[Theorem 3.1]{JamPerSiTah2015}).\smallskip

\begin{prop}\label{propo 4.4 in JamPeSidTah}\cite[Proposition 4.4]{JamPerSiTah2015}
Let $a$ and $b$ be elements in a JB$^*$-triple $E$. Suppose $\|a-b\| < \beta$ and $b \in E_{q}^{-1}$. Then $a + \beta r(b)\in E_{q}^{-1}$ and the inequality $$ m_{q} (a+ \beta r(b)) \geq \beta -\|b-a\|,$$ holds. Furthermore, under the above conditions, the element $P_{2} (r(b))(a) +  \beta r(b)$ is invertible in the JB$^*$-algebra $E_{2}(r(b))$.$\hfill\Box$
\end{prop}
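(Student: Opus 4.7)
The plan is to reduce everything to a single anchor, the element $b+\beta e$ with $e:=r(b)$. Since $b\in E_q^{-1}$, the range tripotent $e$ is complete in $E$ (so $E_0(e)=\{0\}$ and $E=E_2(e)\oplus E_1(e)$) and $b$ is positive and invertible in the unital JB$^*$-algebra $E_2(e)$. In particular $b+\beta e\ge\beta e$ in $E_2(e)$, so $b+\beta e$ is invertible there with $\|(b+\beta e)^{-1}\|_{E_2(e)}\le 1/\beta$.

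The ``furthermore'' statement is then the easy part: writing $c:=P_2(e)(a)+\beta e$ and using $P_2(e)(b)=b$ together with contractivity of $P_2(e)$, one gets $\|c-(b+\beta e)\|\le\|a-b\|<\beta$, and a standard Neumann-series argument in $E_2(e)$ yields invertibility of $c$ in $E_2(e)$.

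The heart of the proof is the estimate $m_q(b+\beta e)\ge\beta$. Here the Peirce rule $\{E_2(e),E_1(e),E_2(e)\}\subseteq E_3(e)=\{0\}$ forces $Q(b+\beta e)$ to annihilate $E_1(e)$ and to coincide on $E_2(e)$ with the $Q$-operator computed inside the JB$^*$-algebra $E_2(e)$. Combined with the identity $\mathrm{dist}(y,E_1(e))=\|P_2(e)(y)\|$ (valid because $P_0(e)=\{0\}$), this gives
\[
\gamma^{q}(b+\beta e)=\gamma\bigl(Q_{E_2(e)}(b+\beta e)\bigr)=\|Q_{E_2(e)}((b+\beta e)^{-1})\|^{-1}\ge\|(b+\beta e)^{-1}\|^{-2}\ge\beta^{2},
\]
using $Q(x)^{-1}=Q(x^{-1})$ and $\|Q(x)\|\le\|x\|^{2}$ in a JB$^*$-algebra. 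Since $b+\beta e\in E_q^{-1}$, the cited \cite[Theorem~3.1]{JamPerSiTah2015} converts this into $m_q(b+\beta e)=\sqrt{\gamma^q(b+\beta e)}\ge\beta$.

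The conclusions for $a+\beta e$ now fall out of $m_q(x)=\mathrm{dist}(x,E\setminus E_q^{-1})$: the hypothesis $\|(a+\beta e)-(b+\beta e)\|=\|a-b\|<\beta\le m_q(b+\beta e)$ places $a+\beta e$ inside $E_q^{-1}$, and the reverse triangle inequality $\|z-(a+\beta e)\|\ge\|z-(b+\beta e)\|-\|a-b\|\ge\beta-\|a-b\|$ for any $z\in E\setminus E_q^{-1}$ gives $m_q(a+\beta e)\ge\beta-\|a-b\|$. The main technical obstacle, as I see it, is the third paragraph---identifying the quadratic conorm of $b+\beta e$ computed in the ambient triple $E$ with the invertibility constant of $b+\beta e$ inside $E_2(e)$; once the completeness of $e$ and the Peirce calculus carry out that reduction, the rest of the argument is a Neumann series and a triangle inequality.
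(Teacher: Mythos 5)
Your argument is correct. Note first that the paper itself offers no proof of this proposition: it is imported verbatim from \cite[Proposition 4.4]{JamPerSiTah2015} and closed with a $\Box$, so there is no in-paper argument to compare against. Your reconstruction via the anchor element $b+\beta e$ is sound: completeness of $e=r(b)$ gives $E=E_2(e)\oplus E_1(e)$; the Peirce rule kills $\{E_2(e),E_1(e),E_2(e)\}$, so $\ker Q(b+\beta e)=E_1(e)$ and ${\rm dist}(y,E_1(e))=\|P_2(e)(y)\|$ by contractivity of $P_2(e)$, which legitimately reduces $\gamma^q(b+\beta e)$ to the invertibility constant of $U_{b+\beta e}$ inside $E_2(e)$ (the involution $*_e$ being isometric, it does not affect the reduced minimum modulus); the spectral bound $\sigma(b+\beta e)\subseteq(\beta,\beta+\|b\|]$ gives $\|(b+\beta e)^{-1}\|\le 1/\beta$, hence $\gamma^q(b+\beta e)\ge\beta^2$, and \cite[Theorem 3.1]{JamPerSiTah2015} converts this to $m_q(b+\beta e)\ge\beta$; the two triangle inequalities then deliver both the membership $a+\beta e\in E_q^{-1}$ and the lower bound on $m_q(a+\beta e)$.

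Two small points of rigor. First, before invoking $m_q=\sqrt{\gamma^q}$ you must already know $b+\beta e\in E_q^{-1}$; this does follow from your first paragraph together with the characterization of BP quasi-invertibility (positive and invertible in $E_2(v)$ for a complete tripotent $v$), but it deserves to be said explicitly since the identity $m_q=\sqrt{\gamma^q}$ is only asserted on $E_q^{-1}$. Second, the ``standard Neumann-series argument'' in the Jordan algebra $E_2(e)$ is not a literal geometric series: the clean route is to apply $U_{(b+\beta e)^{-1/2}}$ (which preserves invertibility since $U_{U_xy}=U_xU_yU_x$) and check $\|e-U_{(b+\beta e)^{-1/2}}(c)\|\le\|(b+\beta e)^{-1}\|\,\|P_2(e)(a)-b\|<1$; this is available precisely because $b+\beta e$ is positive, so the step is correct but should be phrased that way.
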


The promised characterization of extremally rich JB$^*$-triples reads as follows:

\begin{prop}\label{p charact of extremally rich} For a JB$^*$-triple $E$ with $\partial_{e} (E_1)\neq \emptyset$, the following conditions are equivalent:\begin{enumerate}[$(i)$]\item $E$ is extremally rich;
\item For every $ a \in E$ and any $\beta > 0$, there is an element $b \in E_{q}^{-1}$, with range tripotent $r(b) \in \partial_{e}(E_{1}),$ satisfying that  $a + \beta r(b)\in E_{q}^{-1}$.
\item For every $ a \in E$ and any $\beta > 0$, there is an element  $b \in E_{q}^{-1}$ such that $P_{2} (r(b)) (a) +  \beta r(b) $ is invertible in the JB$^*$-algebra $E_{2}(r(b))$.
\end{enumerate}
\end{prop}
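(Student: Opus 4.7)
The plan is to take Proposition~\ref{propo 4.4 in JamPeSidTah} as the main engine. That proposition already produces, from the single hypothesis ``$\|a-b\|<\beta$ with $b\in E_q^{-1}$'', both of the conclusions that appear in $(ii)$ and $(iii)$. Hence the forward implications $(i)\Rightarrow(ii)$ and $(i)\Rightarrow(iii)$ reduce essentially to a single application, and the substantive work lies in the two converses.

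For $(i)\Rightarrow(ii)$ and $(i)\Rightarrow(iii)$, fix $a\in E$ and $\beta>0$. The density of $E_q^{-1}$ in $E$ provides some $b\in E_q^{-1}$ with $\|a-b\|<\beta$, and applying Proposition~\ref{propo 4.4 in JamPeSidTah} directly yields both $a+\beta r(b)\in E_q^{-1}$ and the invertibility of $P_2(r(b))(a)+\beta r(b)$ in the JB$^*$-algebra $E_2(r(b))$. The only item left to check for $(ii)$ is that $r(b)\in\partial_e(E_1)$: since $b\in E_q^{-1}$ is positive and invertible in $E_2(v)$ for some complete tripotent $v$, its range tripotent necessarily coincides with $v$, and complete tripotents are extreme points of the closed unit ball by the fact recalled in the preliminaries.

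The implication $(ii)\Rightarrow(i)$ is immediate: given $a\in E$ and $\varepsilon>0$, apply $(ii)$ with $\beta=\varepsilon$ to obtain $b\in E_q^{-1}$ with $a+\varepsilon r(b)\in E_q^{-1}$. Since $r(b)\in\partial_e(E_1)$ is a non-zero tripotent it has norm one, so $a+\varepsilon r(b)$ lies within distance $\varepsilon$ of $a$, and letting $\varepsilon\downarrow 0$ yields the density of $E_q^{-1}$.

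The main obstacle is $(iii)\Rightarrow(i)$, because $(iii)$ offers no a priori control on $\|a-b\|$ nor on the Peirce-$1$ component $P_1(r(b))(a)$; only the invertibility of the Peirce-$2$ component is available. The plan is to invoke the following fact, which can be extracted from the proof of Proposition~\ref{propo 4.4 in JamPeSidTah} together with the BP quasi-invertibility machinery developed in \cite{TahSiddJam2013,JamPerSiTah2015}: if $v$ is a complete tripotent of $E$ and $y\in E$ satisfies that $P_2(v)(y)$ is invertible in the JB$^*$-algebra $E_2(v)$, then $y\in E_q^{-1}$. Applied with $v=r(b)$ and $y=a+\beta r(b)$ (whose Peirce-$0$ component vanishes by completeness of $r(b)$ and whose Peirce-$2$ component is invertible by hypothesis), this produces $a+\beta r(b)\in E_q^{-1}$ at distance $\beta$ from $a$, and density follows just as in $(ii)\Rightarrow(i)$.
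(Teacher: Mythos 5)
Your proposal is correct and follows essentially the same route as the paper: the forward implications come from Proposition~\ref{propo 4.4 in JamPeSidTah}, $(ii)\Rightarrow(i)$ is immediate from density, and the converse from $(iii)$ rests on exactly the fact you isolate (invertibility of the Peirce-$2$ component relative to a complete tripotent forces BP quasi-invertibility), which the paper cites directly as Lemma~2.2 of \cite{JamPerSiTah2015} in its proof of $(iii)\Rightarrow(ii)$. The only cosmetic difference is that you close the cycle via $(iii)\Rightarrow(i)$ rather than $(iii)\Rightarrow(ii)$, with an identical underlying argument.
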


\begin{proof}
The implications $(i) \Rightarrow (ii)$ and $(i) \Rightarrow (iii)$ follow from the above Proposition \ref{propo 4.4 in JamPeSidTah} (see \cite[Proposition 4.4]{JamPerSiTah2015}). The implication $(ii) \Rightarrow (i)$ is clear from the definition of extremal richness and the arbitrariness of $\beta $.\smallskip

$(iii) \Rightarrow (ii)$ Fix $a\in E$ and $\beta>0$. By assumptions, there exists $b \in E_{q}^{-1}$ such that $P_{2} (r(b)) (a) +  \beta r(b) \in E_{2} (r(b))$ is invertible in the JB$^*$-algebra $E_{2}(r(b))$. Since $r(b)$ is an extreme point of $E$ and $ P_{2}(r(b)) (a + \beta r(b)) = P_{2}(r(b)) (a) + \beta r(b)) $ is invertible in the JB$^*$-algebra $E_{2}(r(b))$, it follows from \cite[Lemma 2.2]{JamPerSiTah2015} that $a + \beta r(b) \in E^{-1}_{q}$, and clearly $\left\| a -(a + \beta r(b)) \right\|= \beta$.
\end{proof}

We shall explore next the stability of the property of being extremally rich under $\ell_{\infty}$-sums, ideals, and quotients. We recall that a (closed) subtriple $I$ of a JB$^*$-triple $E$ is said
to be an \emph{ideal} of $E$ if $\{E,E,I\} + \{E,I,E\}\subseteq I$. It is known that $I$ is an ideal if and only if $\{E,E,I\} \subseteq I$ or $\{ E,I,E\} \subseteq I$ (compare \cite{Bun86}). 

\begin{thm}\label{t:quot sum}
Every quotient of an extremally rich JB$^*$-triple is extremally rich. Let $(E_j)$ be a family of JB$^*$-triples. Then
an element $a= (a_j) \in E$ is BP quasi-invertible if, and only if, $a_j$ is BP quasi-invertible in $E_j$ for every $j$. Consequently, the $\ell_{\infty}$-sum $\displaystyle E=\oplus_{j}^{\infty} E_j$ is an extremally rich JB$^*$-triple if and only if each $E_j$ is extremally rich.
\end{thm}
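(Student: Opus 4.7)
My plan is to prove the three assertions in the stated order, with the quotient claim feeding one half of the final equivalence and Proposition \ref{propo 4.4 in JamPeSidTah} providing the quantitative ingredient for the other.

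For the quotient assertion, I would let $I$ be a closed triple ideal of an extremally rich JB$^*$-triple $E$ and $\pi\colon E\to E/I$ the quotient triple epimorphism. The first step is to check that $\pi$ intertwines Bergman operators, i.e.\ $\pi\circ B_E(x,y) = B_{E/I}(\pi(x),\pi(y))\circ\pi$ for all $x,y\in E$; this is automatic from $B=\mathrm{Id}-2L+Q\cdot Q$ and the fact that $\pi$ preserves the triple product. Consequently $B_E(b,y)=0$ forces $B_{E/I}(\pi(b),\pi(y))=0$ by surjectivity of $\pi$, so $\pi\bigl(E_q^{-1}\bigr)\subseteq (E/I)_q^{-1}$. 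Density then transfers via lifting: given $\overline{a}\in E/I$ and $\varepsilon>0$, pick any preimage $a\in E$ and approximate it by some $b\in E_q^{-1}$ within $\varepsilon$; then $\pi(b)\in (E/I)_q^{-1}$ and $\|\overline{a}-\pi(b)\|\leq\varepsilon$.

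For the coordinate characterization of $E_q^{-1}$ in $E=\oplus_j^\infty E_j$, the key observation is that the Bergman operator acts coordinate-wise, because the triple product itself does:
\[
B\bigl((a_j),(y_j)\bigr)\bigl((z_j)\bigr) \;=\; \bigl(B_{E_j}(a_j,y_j)(z_j)\bigr).
\]
Hence $B(a,y)=0$ in $E$ if and only if $B_{E_j}(a_j,y_j)=0$ for every $j$, which gives the ``only if'' implication immediately. The converse requires selecting coordinate-wise BP quasi-inverses $y_j\in E_j$ whose norms stay uniformly bounded so that $(y_j)\in E$; this is the delicate point and is precisely where uniform control on the inverses inside the Peirce $2$-spaces is indispensable.

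For the equivalence of extremal richness the ``only if'' direction is immediate from the quotient statement, since each $E_j$ is a quotient of $E$ by the ideal $\oplus_{k\neq j}E_k$. For the ``if'' direction I would invoke Proposition \ref{propo 4.4 in JamPeSidTah} coordinate-wise with a common margin. Given $a=(a_j)\in E$ and $\beta>0$, extremal richness of each $E_j$ yields $b_j\in (E_j)_q^{-1}$ with $\|a_j-b_j\|<\beta/2$; the range tripotents $r(b_j)\in\partial_e((E_j)_1)$ assemble into a complete tripotent $v=(r(b_j))$ of $E$ (completeness survives because $P_0$ acts coordinate-wise). Proposition \ref{propo 4.4 in JamPeSidTah} then gives $a_j+\beta\, r(b_j)\in (E_j)_q^{-1}$ with $m_q(a_j+\beta\, r(b_j))\geq \beta/2$, and, more importantly, $P_2(r(b_j))(a_j)+\beta\, r(b_j)$ invertible in $E_{2,j}(r(b_j))$ with inverse norm controlled uniformly in $j$ (by $2/\beta$). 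This uniform bound is exactly what is needed to conclude that $P_2(v)(a)+\beta v$ is invertible in the $\ell_\infty$-sum of JB$^*$-algebras $E_2(v)=\oplus_j^\infty E_{2,j}(r(b_j))$, so by \cite[Lemma 2.2]{JamPerSiTah2015} the element $a+\beta v$ lies in $E_q^{-1}$. Since $\|a-(a+\beta v)\|=\beta$ and $\beta>0$ is arbitrary, $E_q^{-1}$ is dense in $E$.

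The main obstacle throughout is the uniform control on sizes of inverses in the Peirce $2$-spaces: a pointwise selection of BP quasi-inverses in an infinite $\ell_\infty$-sum need not be bounded, so one cannot simply glue coordinate data. Proposition \ref{propo 4.4 in JamPeSidTah} supplies precisely the uniform lower bound $m_q\geq \beta/2$ that bypasses this difficulty for the extremal richness argument, and its inverse-norm estimate $2/\beta$ is what permits the coordinate assembly in $E_2(v)$.
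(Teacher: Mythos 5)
Your quotient argument and the ``only if'' half of the coordinate characterization coincide with the paper's proof: the paper likewise uses that surjective triple homomorphisms preserve BP quasi-invertibility and then lifts approximants. Your treatment of the final equivalence, however, genuinely diverges from the paper's. The paper deduces it in one line from the coordinate characterization of $E_q^{-1}$, whereas you bypass that characterization and argue directly via Proposition \ref{propo 4.4 in JamPeSidTah}: you assemble the range tripotents $r(b_j)$ into a complete tripotent $v$ of the sum and use the uniform lower bound $m_q(a_j+\beta\, r(b_j))>\beta/2$ to keep the inverses in the Peirce $2$-spaces uniformly bounded. That is a more robust route and, as explained below, it is actually the safer one.

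The genuine gap is in the ``if'' direction of the middle assertion. You correctly observe that one must choose BP quasi-inverses $y_j$ with $\sup_j\|y_j\|<\infty$ so that $(y_j)$ lies in the $\ell_\infty$-sum, and you call this ``the delicate point'', but you never supply the argument, so your proof of that implication is incomplete. (The paper's own proof does not address it either: it writes down $(b_j)$ with $B(a_j,b_j)=0$ and asserts $B((a_j),(b_j))=0$ on $E$ without checking that $(b_j)$ is a bounded family.) Your worry is substantive rather than cosmetic: take $E_j=\mathbb{C}$ and $a_j=1/j$; each $a_j$ is BP quasi-invertible with unique quasi-inverse $b_j=j$, an unbounded family, and $(1/j)_j$ is not BP quasi-invertible in $\ell_\infty$, since in a commutative unital C$^*$-algebra the extreme points of the unit ball are unitaries and BP quasi-invertibility reduces to ordinary invertibility. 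So the unrestricted coordinate characterization cannot be established for infinite families by any choice of argument; a correct statement needs either a finiteness assumption or a uniform bound on the quasi-inverses. This makes your independent derivation of the extremal-richness equivalence from Proposition \ref{propo 4.4 in JamPeSidTah} the part of the proposal worth keeping, precisely because it does not pass through the problematic implication.
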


\begin{proof} Let $E$ be an extremally rich JB$^*$-triple, and let $J$ be a closed ideal of $E$. Let $\pi:E\to E/J,$ $\pi(x) = x+J$, denote the canonical projection of $E$ onto $E/J$. Since $\pi$ is a surjective triple homomorphism, it follows that $\pi (E_q^{-1}) \subseteq (E/J)_q^{-1}$ (cf. \cite[Theorem 3]{TahSiddJam2013}). By hypothesis, there exist a sequence $(x_{n})$ in $E^{-1}_{q}$ converging to $x$ in the norm topology of $E$. Since $\pi$ is continuous, $\pi(x_{n}) \rightarrow \pi(x) $ in norm,  which proves that $\pi (x) \in \overline{(E/J)^{-1}_{q}}$, and hence $\overline{(E/J)^{-1}_{q}} = E/J$.\smallskip

Now, let $(E_{j})_{j \in I}$ be an indexed family of JB$^*$-triples. We set $\displaystyle E= \oplus^{\infty}_j E_{j}$. 
Suppose $a= (a_j) \in E$ is BP quasi-invertible. Since, for each $j_0$, the canonical projection $\displaystyle \pi_{j_0} : \oplus_{j \in I}^{\infty} E_{j} \rightarrow E_{j_0}$ is a surjective triple homomorphism, we deduce that $a_{j_0}= \pi_{j_0} (a) \in (E_{j_0})_q^{-1}$. Suppose now, that $a_j$ is BP quasi-invertible in $E_j$ for every $j$. Consider $b_j\in E_j$ satisfying $B(a_j,b_j)=0$ on $E_j$. Then $B((a_j),(b_j)) =0$ on $E$, which shows that $a = (a_j)$ is BP quasi-invertible in $E$. The final statement follows from the above.
\end{proof}

Theorem 3.6 in \cite{JamPerSiTah2015} establishes that for every JB$^*$-triple  $E$ with $\partial_{e} (E_1)\neq  \emptyset,$ the inequalities $$1+\|a\|\geq \hbox{dist} (a, \partial_{e} (E_1)) \geq \max \left\{ 1+ \alpha_q (a) ,  \|a\|-1  \right\},$$ hold for every $a$ in $E\backslash E_q^{-1}.$ Under the additional hypothesis of $E$ being extremally rich we can obtain an optimal computation of the distance from an element in $E$ to the set $\partial_{e} (E_1)$ of extreme points of $E_1$.

\begin{thm}\label{thm extremally rich JB*-triples satisfy lambda property on the open unit ball} Let $E$ be an extremally rich JB$^*$-triple and $x \in E \backslash E^{-1}_{q}$, then dist$(x, \partial_{e}(E_{1})) = \max\{1 , \|x\| - 1\}$. In particular, if $x \in E_{1} $ then dist$(x,\partial_{e}(E_{1}) )= 1$. Consequently, for each $x$ in $E$ we have $$\hbox{dist} (x,\partial_e (E_1)) =\left\{\begin{array}{lc}
                                                                 \max \left\{ 1- m_q (x) , \|x\|-1\right\}, & \hbox{ if } x\in E_q^{-1}; \\
                                                                 \ & \ \\
                                                                 \max \left\{ 1 , \|x\|-1\right\}, & \hbox{ if } x\notin E_q^{-1},
                                                               \end{array}
 \right.$$ where $m_q (x)=\hbox{dist} (x,E\backslash E_q^{-1})$.
\end{thm}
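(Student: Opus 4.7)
The lower bound is a free gift of extremal richness. Since $E_q^{-1}$ is norm dense in $E$ we have $\alpha_q(x) = 0$ for every $x \in E$, so the lower inequality from \cite[Theorem~3.6]{JamPerSiTah2015} (recalled in the excerpt) degenerates to $\hbox{dist}(x,\partial_e(E_1)) \geq \max\{1,\|x\|-1\}$ for every $x \in E \setminus E_q^{-1}$. The real content of the theorem is therefore the matching upper bound.

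To produce it, I would fix $\varepsilon > 0$ and use extremal richness to pick $b \in E_q^{-1}$ with $\|x-b\| < \varepsilon$. The characterisation of BP quasi-invertibility recalled in the preliminaries says that $v := r(b)$ is a complete tripotent of $E$, hence an extreme point of $E_1$, and that $b$ is positive and invertible in the JB$^*$-algebra $E_2(v)$, whose unit is $v$. This already identifies $v$ as a bona fide candidate for an extreme point close to $x$, and the problem reduces to estimating $\|b-v\|$ inside $E_2(v)$.

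The estimate is a one-line Gelfand computation. In the unital commutative JB$^*$-subalgebra of $E_2(v)$ generated by $b$ and $v$, the tripotent $v$ is the constant function $1$ while $b$ is a strictly positive continuous function with spectrum $[m,\|b\|]\subset(0,\infty)$. Hence
\[ \|b-v\| \;=\; \max\{1-m,\;\|b\|-1\} \;\leq\; \max\{1,\;\|b\|-1\}, \]
and combining this with $\|b\| \leq \|x\|+\varepsilon$ (splitting into the cases $\|x\|\leq 2$ and $\|x\|>2$) yields
\[ \|x-v\| \;\leq\; \|x-b\| + \|b-v\| \;\leq\; 2\varepsilon + \max\{1,\,\|x\|-1\}. \]
Letting $\varepsilon\downarrow 0$ delivers the required upper bound and hence the main equality; the specialisation to $x\in E_1$ is then immediate since $\max\{1,\|x\|-1\}=1$ there. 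The final piecewise formula is obtained by combining this with the analogous Gelfand computation for $x\in E_q^{-1}$, where $v = r(x)$ is itself an extreme point of $E_1$ and the identification $m_q(x)=\sqrt{\gamma^q(x)}$ (see \cite[Theorem~3.1]{JamPerSiTah2015}) converts the spectral minimum $m$ into $m_q(x)$.

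The main technical obstacle I foresee is certifying the Gelfand identity $\|b-v\| = \max\{1-m,\,\|b\|-1\}$: one must check that $b$ genuinely lies in $E_2(v)$ (rather than only projecting there with a non-trivial Peirce-$1$ component) and that the JB$^*$-algebra norm on $E_2(v)$ agrees with the one inherited from $E$. Both of these are built into the characterisation of BP quasi-invertibility used throughout the paper, but they are precisely the places where the interplay between the triple product of $E$ and the Jordan product of $E_2(v)$ must be handled with care.
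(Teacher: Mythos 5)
Your proposal is correct and follows essentially the same route as the paper: the lower bound comes from the general inequality of \cite[Theorem~3.6]{JamPerSiTah2015} together with $\alpha_q\equiv 0$ on an extremally rich triple, and the upper bound comes from approximating $x$ by some $b\in E_q^{-1}$ and using the range tripotent $r(b)\in\partial_e(E_1)$ as the nearby extreme point. The only difference is that the paper outsources the estimate $\hbox{dist}(x,\partial_e(E_1))\leq\max\{1,\|x\|-1\}$ on $\overline{E_q^{-1}}=E$ to \cite[Theorem~27]{TahSiddJam2013}, whereas you unpack it into the explicit Gelfand computation $\|b-r(b)\|=\max\{|1-m|,\|b\|-1\}$ (note the spectrum need only be a compact subset of $[m,\|b\|]$, and $m$ may exceed $1$, but the bound $\leq\max\{1,\|b\|-1\}$ survives either way); this is a harmless and correct substitution.
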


\begin{proof} Since the JB$^*$-triple $E$ is extremally rich,  $\alpha_q (x) = \hbox{dist} (x,E_q^{-1})= 0$ for all $x \in E$. Theorem 3.6 in \cite{JamPerSiTah2015} implies that $$ \hbox{dist}(x, \partial_{e}(E_{1}) \geq \max\{1+\alpha_q (x), \|x\| - 1\} = \max\{1, \|x\| - 1\} =1.$$ Applying \cite[Theorem 27]{TahSiddJam2013} we obtain
dist$(x, \partial_{e}(E_{1}) \leq \max\{1 , \|x\| - 1\}$  for all  $x \in \overline{E_q^{-1}}= E$. Combining the above inequalities we have $$\hbox{dist} (x, \partial_{e}(E_{1}) = \max\{1 , \|x\| - 1\},$$  for all  $x \in E \backslash E^{-1}_{q}$. The second statement of the theorem follows immediately when $\|x\| \leq 1$.\smallskip

The final statement follows form the first estimation and from \cite[Theorem 3.1 and Proposition 3.2]{JamPerSiTah2015}.
\end{proof}

We have already commented that from the geometric point of view of Banach space theory, a C$^*$-algebra is extremally rich if and only if it has the (uniform) $\lambda$-property (cf. \cite[\S 3]{BP95} and \cite[Theorem 3.7]{BP97}). We do not know if this statement remains true for JB$^*$-triples. We know that every JBW$^*$-triple satisfies the uniform $\lambda$-property (cf. \cite{JamPerSiTah2015}). For an element $a$ in a JB$^*$-triple $E$ we further known that $\lambda (a) = \frac{1+m_q (a)}{2},$ whenever $a$ is BP quasi-invertible element in $E_1$ (\cite[Theorem 3.4]{JamPerSiTah2015}). If we also assume that $\partial_{e} (E_1)\neq  \emptyset,$ the inequalities $$1+\|a\|\geq \hbox{dist} (a, \partial_{e} (E_1)) \geq \max \left\{ 1+ \alpha_q (a) ,  \|a\|-1  \right\},$$ hold for every $a$ in $E\backslash E_q^{-1}$ (\cite[Theorem 3.6]{JamPerSiTah2015}), and hence \begin{equation}\label{eq inequality lambda 3.6} \lambda (a)\leq \frac12 (1-\alpha_q (a)),
 \end{equation}for every $a\in E_1\backslash E_q^{-1}$. We shall prove now that the $\lambda$-function takes only values bigger or equal than $\frac12$ on the open unit ball of an extremally rich JB$^*$-triple.

\begin{corollary}\label{c lambda function on the open unit ball of a extremally rich} Let $a$ be an element in the open unit ball of an extremally rich JB$^*$-triple. Suppose $a$ is not BP quasi-invertible. Then $\lambda (a)= \frac12.$
\end{corollary}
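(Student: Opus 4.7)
The plan is to establish the two bounds $\lambda(a) \leq \tfrac12$ and $\lambda(a) \geq \tfrac12$ separately. The upper bound is immediate: since $E$ is extremally rich, $\alpha_q(a) = \hbox{dist}(a, E_q^{-1}) = 0$, so inequality \eqref{eq inequality lambda 3.6} yields $\lambda(a) \leq \tfrac12(1 - \alpha_q(a)) = \tfrac12$.

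For the reverse inequality, I will construct, for each $\mu \in (0, \tfrac12)$, an amenable $3$-tuple of the form $(e, y, \mu)$ for $a$, which gives $\lambda(a) \geq \mu$. The construction exploits Theorem~\ref{thm extremally rich JB*-triples satisfy lambda property on the open unit ball} applied not to $a$ itself but to the rescaled element $a/\mu$. Since positive real scaling preserves $E_q^{-1}$ (a BP quasi-invertible element, being positive and invertible in some Peirce $2$-space $E_2(v)$ with $v$ a complete tripotent, remains so after multiplication by a positive real), $a/\mu$ also lies in $E \setminus E_q^{-1}$. Applying Theorem~\ref{thm extremally rich JB*-triples satisfy lambda property on the open unit ball} to $a/\mu$ and rescaling by $\mu$ gives
\[
 \hbox{dist}(a, \mu\, \partial_e(E_1)) \;=\; \mu\, \hbox{dist}(a/\mu, \partial_e(E_1)) \;=\; \max\{\mu, \|a\| - \mu\}.
\]
The strict inequalities $\mu < \tfrac12$ and $\|a\| < 1$ translate respectively into $\mu < 1 - \mu$ and $\|a\| - \mu < 1 - \mu$, so $\hbox{dist}(a, \mu\, \partial_e(E_1)) < 1 - \mu$. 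Hence some $e \in \partial_e(E_1)$ satisfies $\|a - \mu e\| < 1 - \mu$, and the choice $y := (a - \mu e)/(1 - \mu)$ yields $\|y\| < 1$ together with $a = \mu e + (1 - \mu) y$. Taking the supremum over $\mu < \tfrac12$ gives $\lambda(a) \geq \tfrac12$.

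The delicate point is the need for strict inequality in the distance estimate. Trying to work directly at $\mu = \tfrac12$ would only yield $\hbox{dist}(a, \tfrac12\, \partial_e(E_1)) = \tfrac12$ with the infimum generally not attained, which is insufficient for producing a genuine amenable $3$-tuple; the scaling trick at $\mu < \tfrac12$ followed by the limit $\mu \uparrow \tfrac12$ is what circumvents this obstacle.
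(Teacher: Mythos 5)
Your argument is correct and is essentially the paper's own proof: the authors also obtain the upper bound from inequality \eqref{eq inequality lambda 3.6} with $\alpha_q(a)=0$, and for the lower bound they fix $t<\frac12$, set $\beta=\frac1t>2$, and apply Theorem \ref{thm extremally rich JB*-triples satisfy lambda property on the open unit ball} to $\beta a$ to find $e\in\partial_e(E_1)$ with $\|\beta a-e\|<\beta-1$, which is exactly your rescaling with $\mu=1/\beta$. Your explicit observation that positive real scaling preserves $E\setminus E_q^{-1}$ (needed to apply the theorem to $a/\mu$) is a point the paper leaves implicit, but otherwise the two proofs coincide.
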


\begin{proof} Let us pick a real number $t<\frac12$. Clearly $\beta = \frac1t >2$. Since $\|a\|<1$ and $a\in E\backslash E_q^{-1}$, we deduce, via Theorem \ref{thm extremally rich JB*-triples satisfy lambda property on the open unit ball}, that $$ \hbox{dist} (\beta a, \partial_{e}(E_{1})) = \max\{ 1 , \beta \|a\| - 1\}< \beta -1. $$ Therefore, there exists $e\in \partial_e (E_1)$ satisfying $\|\beta a -e\| < \beta-1$. The element $y= \frac{1}{\beta-1} (\beta a -e)$ lies in the open unit ball of $E$, and we can write $\beta a = e + (\beta-1) y,$ and hence $a = \frac1{\beta} e + \frac{\beta-1}{\beta} y = t e + (1-t) y$, which proves that $\lambda (a) \geq t$. The arbitrariness of $t$ shows that $\frac12 \leq \lambda(a)$. The final statement follows from \cite[Theorem 3.6]{JamPerSiTah2015}.
\end{proof}

\begin{remark}\label{r JB*-triples with the uniform lambda 12 propety are extremally rich} Let $E$ be a JB$^*$-triple satisfying the uniform $\lambda$-property of Aron and Lohman with $\frac12 \leq \inf\{\lambda (x) : x\in E_1\}$. We can assure, via \eqref{eq inequality lambda 3.6}, that $\alpha_q (a) =0$, for every $a\in E_1 \backslash E_q^{-1}$. This shows that $E$ is extremally rich.
\end{remark}

In \cite[\S 4]{TahSiddJam2014} the authors introduce the so-called $\Lambda$-condition in JB$^*$-triples. A JB$^*$-triple $E$ satisfies the \emph{$\Lambda$-condition} if for every $v\in\partial_{e}(E_{1})$, and every $y\in (E_{2}(v))_{1}\backslash E^{-1}_{q}$ with $\lambda (y) = 0$ we have $\alpha_{q}(y) = 1.$ We shall consider the following stronger variant: A JB$^*$-triple $E$ satisfies the \emph{strong-$\Lambda$-condition} if for each $y\in E_{1}\backslash E^{-1}_{q}$ with $\lambda (y) = 0$ we have $\alpha_{q}(y) = 1.$ Every C$^*$-algebra $A$ satisfies that $\lambda (a) = \frac12 (1-\alpha_q (a))$ for every  $a\in A_1\backslash A_q^{-1}$ (cf. \cite[Theorem 3.7]{BP97}). Therefore every C$^*$-algebra fulfills the strong-$\Lambda$-condition. A similar identity and statement is also valid for every JBW$^*$-triple (\cite[Theorem 4.2]{JamPerSiTah2015}).\smallskip

Clearly, if $E$ satisfies the strong-$\Lambda$-condition, then $\lambda (a) >0$ for every $a\in E_1\backslash E_q^{-1}$ with $\alpha_q (a)<1$. The following result is a consequence of this fact, Corollary \ref{c lambda function on the open unit ball of a extremally rich}, and the comments preceding it (cf. \cite[Theorems 3.4 and 3.6]{JamPerSiTah2015}).

\begin{corollary} Every extremally rich JB$^*$-triple satisfying the strong-$\Lambda$-condition satisfies the $\Lambda$-property of Aron and Lohman.$\hfill\Box$
\end{corollary}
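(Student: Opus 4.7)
The plan is to verify that $\lambda(a)>0$ for every $a\in E_1$, which is the defining requirement of the Aron--Lohman $\lambda$-property. I would split $E_1$ into three disjoint pieces and handle each separately.

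First, I would dispense with $a\in E_1\cap E_q^{-1}$. For such elements, Theorem~3.4 of \cite{JamPerSiTah2015} (quoted just above the corollary) gives the exact identity $\lambda(a)=\frac{1+m_q(a)}{2}$. Since $m_q(a)\geq 0$, this already yields $\lambda(a)\geq \frac12>0$, with no use of the strong-$\Lambda$-condition needed.

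Next, I would look at $a\in E_1\setminus E_q^{-1}$ with $\|a\|<1$. Here Corollary~\ref{c lambda function on the open unit ball of a extremally rich}, which was just established, delivers $\lambda(a)=\frac12$ on the nose. So again $\lambda(a)>0$ for free.

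The remaining and decisive case is $a\in E_1\setminus E_q^{-1}$ with $\|a\|=1$. This is where the two hypotheses are used in tandem. Since $E$ is extremally rich, $E_q^{-1}$ is norm dense in $E$, so $\alpha_q(a)=\hbox{dist}(a,E_q^{-1})=0$; in particular $\alpha_q(a)<1$. The strong-$\Lambda$-condition, in its contrapositive form, says precisely that any $a\in E_1\setminus E_q^{-1}$ with $\alpha_q(a)<1$ must satisfy $\lambda(a)>0$ (this contrapositive is exactly the ``clearly'' observation in the paragraph preceding the corollary). Combining the three cases, $\lambda$ is strictly positive on the whole closed unit ball $E_1$, which is by definition the $\lambda$-property of Aron and Lohman.

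There is no real obstacle here; the corollary is essentially a bookkeeping statement that packages Theorem~3.4 of \cite{JamPerSiTah2015}, Corollary~\ref{c lambda function on the open unit ball of a extremally rich}, and the contrapositive of the strong-$\Lambda$-condition. The only point worth stating carefully is the sphere case $\|a\|=1$, since Corollary~\ref{c lambda function on the open unit ball of a extremally rich} literally refers to the \emph{open} unit ball and does not apply there; this is exactly the gap the strong-$\Lambda$-condition is introduced to close, and extremal richness is what guarantees the hypothesis $\alpha_q(a)<1$ is met.
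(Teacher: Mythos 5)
Your proof is correct and follows essentially the same route as the paper, which deduces the corollary from Theorem 3.4 of \cite{JamPerSiTah2015} (for BP quasi-invertible elements), Corollary \ref{c lambda function on the open unit ball of a extremally rich} (for non-quasi-invertible elements of norm less than one), and the observation that the strong-$\Lambda$-condition together with $\alpha_q(a)=0$ forces $\lambda(a)>0$ on the remaining sphere case. Your three-case decomposition simply makes explicit what the paper compresses into a single sentence of citations.
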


\section{Quadratic conorm in extremally rich JB$^*$-triples}

As mentioned in the introduction, the quadratic conorm, $\gamma^{q} (a),$ of an element $a$ in a JB$^*$-triple $E$ is defined as the reduced minimum modulus of the conjugate linear operator $Q(a)$ (see \cite[Definition 3.1]{BurKaMoPeRa}), that is, $$\gamma^q (a) := \gamma (Q(a)) = \inf \{ \|Q(a) (x)\| : \hbox{dist}(x,\ker(Q(a))) \geq 1 \}.$$ The authors in \cite{BurKaMoPeRa} established that $\gamma^{q}(a) = \frac{1}{\|a^{\dag}\|^{2}},$  whenever $a$ is von Neumann regular (where $a^{\dag}$ is the unique generalized inverse of $a$), and $\gamma^q (a) =0$ otherwise (see  \cite[Theorem 3.4 and its proof]{BurKaMoPeRa}).\smallskip

Theorem 8 in \cite{TahSiddJam2013} asserts that the set $E_q^{-1}$ of all BP quasi-invertible elements in a JB$^*$-triple $E$ is open in the norm topology. A more explicit measure of this fact is given in the next result.

\begin{lemma}\label{l 31}
Let $a$ be a BP quasi-invertible element in a JB$^*$-triple $E$. Suppose
$b$ is an element in $E$  satisfying $\|a - b\| < \gamma^{q}(a)^{\frac{1}{2}}$. Then $b \in E^{-1}_{q}$.
\end{lemma}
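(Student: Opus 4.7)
The plan is to read this as an immediate consequence of the identity $m_q(a)=\sqrt{\gamma^q(a)}$ (valid for BP quasi-invertible $a$), which the authors have already recorded just before Proposition~\ref{propo 4.4 in JamPeSidTah} as following from \cite[Theorem~3.1]{JamPerSiTah2015}. Once that identity is in hand, the lemma becomes essentially tautological: the ball of radius $m_q(a)$ around $a$ lies entirely inside $E_q^{-1}$ by the very definition of $m_q(a)$.

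More explicitly, I would argue as follows. By hypothesis $a\in E_q^{-1}$, so the quoted identity gives
\[
m_q(a)=\operatorname{dist}(a,E\setminus E_q^{-1})=\sqrt{\gamma^q(a)}.
\]
Now assume $b\in E$ satisfies $\|a-b\|<\gamma^q(a)^{1/2}=m_q(a)$. Were $b$ in $E\setminus E_q^{-1}$, then $\operatorname{dist}(a,E\setminus E_q^{-1})\leq \|a-b\|<m_q(a)$, a contradiction. Hence $b\in E_q^{-1}$, which is the claim.

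What is really being done is to repackage the openness of $E_q^{-1}$ (already known from \cite[Theorem 8]{TahSiddJam2013}) into a quantitative estimate whose sharp constant is $\gamma^q(a)^{1/2}$. Consequently there is no essential obstacle inside this lemma; the genuine content sits in the prior formula $m_q(a)=\sqrt{\gamma^q(a)}$, whose proof reduces to analyzing the conjugate linear operator $Q(a)$ acting on the Peirce $2$-space $E_2(v)$ of the complete tripotent $v$ witnessing the BP quasi-invertibility of $a$, and using that $a$ is positive and invertible in the JB$^*$-algebra $E_2(v)$ so that $\gamma^q(a)=1/\|a^{\dag}\|^2$. If for some reason one preferred a self-contained argument here, one could alternatively invoke Proposition~\ref{propo 4.4 in JamPeSidTah} applied with the roles of $a$ and $b$ interchanged and with $\beta=\gamma^q(a)^{1/2}$, combined with a perturbation step on the Peirce decomposition at $r(a)$ to conclude that $P_2(r(a))(b)$ is invertible in $E_2(r(a))$ and hence $b\in E_q^{-1}$. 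Either route delivers the conclusion, but the shortest presentation is the direct appeal to the identity $m_q(a)=\sqrt{\gamma^q(a)}$ outlined above.
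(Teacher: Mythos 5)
Your argument is correct, but it is not the route the paper takes. You reduce the lemma to the identity $m_q(a)=\operatorname{dist}(a,E\setminus E_q^{-1})=\gamma^q(a)^{1/2}$ for $a\in E_q^{-1}$, quoted from \cite[Theorem 3.1]{JamPerSiTah2015} and already recorded in Section 2 of the paper; once that identity is accepted, the conclusion is indeed tautological (and the degenerate case $E\setminus E_q^{-1}=\emptyset$ is vacuously fine). The paper instead gives a self-contained perturbation argument: with $e=r(a)$ and $c=a^{1/2}$ in $E_2(e)$, it estimates $\|e-Q(c^{\dag})(P_2(e)(b))\|=\|Q(c^{\dag})(a-b)\|\leq\|a^{\dag}\|\,\|a-b\|<1$ using $\gamma^q(a)=1/\|a^{\dag}\|^2$, concludes that $Q(c^{\dag})(P_2(e)(b))$ and hence $P_2(e)(b)=Q(c)Q(c^{\dag})(P_2(e)(b))$ is invertible in the JB$^*$-algebra $E_2(e)$, and then invokes \cite[Lemma 2.2]{JamPerSiTah2015} to get $b\in E_q^{-1}$. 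What your approach buys is brevity, at the price of outsourcing all the content to the external identity $m_q=\sqrt{\gamma^q}$; what the paper's approach buys is an explicit mechanism (invertibility of the Peirce-$2$ component survives the perturbation), which is then reused in the surrounding results. Your sketched alternative via Proposition \ref{propo 4.4 in JamPeSidTah} with $a$ and $b$ interchanged is essentially a restatement of that mechanism but is left too vague to stand on its own; the direct appeal to the identity is the clean version of your argument and is valid.
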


\begin{proof} We recall that $a$ being BP quasi-invertible implies that $e=r(a)\in \partial_e (E_1)$, and $a$ is positive and invertible in the JB$^*$-algebra $(E_2 (e),\circ_{e},{*_{e}})$. We further know that its generalized inverse $a^{\dag} \in E_{2}(e)$ coincides with its inverse in this JB$^*$-algebra.\smallskip

Let $c=a^{\frac12}$ denote the square root of $a$ in $E_2(e)$. We observe that $a^{\frac12}$ is positive and invertible in $E_2(e).$ Moreover, the inverse of $a^{\frac12}$, $(a^{\frac12})^{-1},$ coincides with $(a^{\frac12})^{\dag}= (a^\dag)^{\frac12},$ where the latter is the square root of $a^{\dag}$ in $E_2(e)$.\smallskip

Since $ \|a - b\| < \gamma^{q}(a)^{\frac{1}{2}}$, then $$\left\| e - Q(c^{\dag}) ( P_2 (e) (b))\right\| = \left\| Q(c^{\dag}) ( a-b )\right\|\leq \|Q(c^{\dag})\| \|a-b\| < \|c^{\dag}\|^2 (\gamma^{q}(a))^{\frac{1}{2}}$$

$$= \|a^{\dag}\|\ \gamma^{q}(a)^{\frac{1}{2}} =\hbox{(see \cite[Theorem 3.4 and its proof]{BurKaMoPeRa})}= 1.$$ Since $e$ is the unit of $E_2 (e)$, we deduce that $Q(c^{\dag}) ( P_2 (e) (b))$ is invertible in $E_2(e).$ It is well known from the theory of invertible elements in JB$^*$-algebras that $Q(c^{\dag})|_{E_2(e)}: E_2 (e)\to E_2 (e)$ is invertible as a mapping from $E_2(e)$ into itself with inverse $Q(c)|_{E_2(e)}: E_2 (e)\to E_2 (e)$ (cf. \cite[\S 4.1.1]{CabRod2014}). Since $Q(c^{\dag}) ( P_2 (e) (b))$ is invertible, we deduce that $P_2 (e) (b) = Q(c) Q(c^{\dag}) ( P_2 (e) (b))$ is invertible in $E_2 (e)$ (cf. \cite[Theorem 4.1.3]{CabRod2014}). Finally, Lemma 2.2 in \cite{JamPerSiTah2015} implies that $b\in E_q^{-1}$, as we desired.
\end{proof}

\begin{prop}\label{p suficient conditions for continuity of seminorm} Let $a$ be a BP quasi-invertible element in a JB$^*$-triple $E$. Suppose $b$ is an element in $E$ such that  $\|a - b\| < \gamma^{q}(a)^{\frac{1}{2}}$. Then $b\in E_q^{-1}$ and  $$\|\gamma^{q}(a) - \gamma^{q}(b)\| < \gamma^{q}(a)^{\frac{1}{2}} \|a - b\|.$$
\end{prop}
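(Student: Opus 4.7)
The plan is to reduce the estimate for $\gamma^q$ to a Lipschitz control on the distance function $m_q(\cdot) := \hbox{dist}(\cdot, E \setminus E_q^{-1})$, exploiting the identity $\gamma^q(x) = m_q(x)^2$ available for every BP quasi-invertible $x$.

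First, Lemma \ref{l 31} applies verbatim to give $b \in E_q^{-1}$, so both $a$ and $b$ are BP quasi-invertible. By Theorem 3.1 of \cite{JamPerSiTah2015} this yields $\gamma^q(a)^{1/2} = m_q(a)$ and $\gamma^q(b)^{1/2} = m_q(b)$. Since $E^{-1}_q$ is open in $E$ (Theorem 8 of \cite{TahSiddJam2013}), its complement is closed, so $m_q$, being the distance to a closed set, is $1$-Lipschitz: $|m_q(a) - m_q(b)| \leq \|a - b\|$.

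Next, a direct factorization yields
$$|\gamma^q(a) - \gamma^q(b)| = \bigl(m_q(a) + m_q(b)\bigr)\,|m_q(a) - m_q(b)| \leq \bigl(m_q(a) + m_q(b)\bigr)\,\|a - b\|,$$
and combining the $1$-Lipschitz bound with the hypothesis $\|a - b\| < m_q(a) = \gamma^q(a)^{1/2}$ gives $m_q(a) + m_q(b) \leq 2\,m_q(a) + \|a - b\| < 3\,\gamma^q(a)^{1/2}$, whence
$$|\gamma^q(a) - \gamma^q(b)| < 3\,\gamma^q(a)^{1/2}\,\|a - b\|.$$

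The main obstacle is to upgrade this estimate to the sharp constant asserted in the statement. To do so, I expect one must return to the explicit description $\gamma^q(x) = 1/\|x^{\dag}\|^2$ from \cite{BurKaMoPeRa} and work inside the JB$^*$-algebra $E_2(r(a))$, where the proof of Lemma \ref{l 31} exhibits $Q(c^{\dag})(P_2(r(a))(b))$ as a perturbation of the unit $e$ at norm distance strictly less than $1$. A Neumann-type expansion of its inverse should produce a sharp estimate of $\|(P_2(r(a))(b))^{-1}\|$, and hence of $\|b^{\dag}\|$, in terms of $\|a^{\dag}\|$ and $\|a - b\|$, from which the precise comparison between $\gamma^q(a)$ and $\gamma^q(b)$ would follow via the identity $\gamma^q(x) = 1/\|x^{\dag}\|^2$.
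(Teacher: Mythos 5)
Your argument follows the paper's own proof almost line for line: Lemma \ref{l 31} gives $b\in E_q^{-1}$, and then the identity $\gamma^{q}(x)=m_q(x)^2$ for BP quasi-invertible $x$ together with the $1$-Lipschitz property of $m_q$ yields $|\gamma^{q}(a)-\gamma^{q}(b)|\leq (m_q(a)+m_q(b))\,\|a-b\|$. The only divergence is at the final step, and there your hesitation is fully warranted: the constant $1$ in the statement cannot be reached, because the stated inequality is simply false. Take $E=\mathbb{C}$ (a C$^*$-algebra, hence a JB$^*$-triple), $a=1$ and $b=3/2$; then $E_q^{-1}=\mathbb{C}\setminus\{0\}$ and $\gamma^{q}(x)=|x|^{2}$, so the hypothesis $\|a-b\|=1/2<1=\gamma^{q}(a)^{1/2}$ holds, yet $|\gamma^{q}(a)-\gamma^{q}(b)|=5/4$ while $\gamma^{q}(a)^{1/2}\|a-b\|=1/2$. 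The paper's proof goes wrong at exactly the point you could not reproduce: after the correct estimate $|\gamma^{q}(a)-\gamma^{q}(b)|\leq \|a-b\|\,(\|a\|+\|b\|)$ it asserts that this quantity is $<\gamma^{q}(a)^{\frac{1}{2}}\|a-b\|$, which would require $\|a\|+\|b\|<\gamma^{q}(a)^{1/2}=m_q(a)\leq\|a\|$, an impossibility. So abandon the Neumann-series refinement sketched in your last paragraph; there is no true sharper statement to refine towards along those lines.

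What you actually proved, namely $|\gamma^{q}(a)-\gamma^{q}(b)|<3\,\gamma^{q}(a)^{1/2}\,\|a-b\|$ (or the cleaner $|\gamma^{q}(a)-\gamma^{q}(b)|\leq(\|a\|+\|b\|)\,\|a-b\|$), is the correct conclusion and is all that the paper ever uses: the proposition is invoked only to obtain continuity of $\gamma^{q}$ at BP quasi-invertible points in Theorem \ref{t characterization of continuity points of conorm}, and any local Lipschitz-type bound does that job. Two minor remarks: the strict inequality fails trivially when $a=b$ (in your version and the paper's alike, both sides vanish), and the left-hand side of the displayed inequality in the statement should of course be an absolute value $|\gamma^{q}(a)-\gamma^{q}(b)|$ rather than a norm.
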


\begin{proof} Lemma \ref{l 31} implies that $b$ is BP quasi-invertible. It is known that $$\gamma^{q}(x) = m_q (x)^{2} \leq \|x\|^2,$$ and $| m_q (x) -m_q (y)| \leq \|x-y\|$ for every $x,y\in E$ (cf. \cite[\S 3]{JamPerSiTah2015}).  Therefore, $$|\gamma^{q}(a) - \gamma^{q}(b)| = |m_{q}(a)^{2} - m_{q}(b)^{2}| = | m_{q}(a) - m_{q}(b)| \ |m_{q}(a) + m_{q}(b)|$$ $$ < \|a - b\| (\|a\| +\|b\|)< \gamma^{q}(a)^{\frac{1}{2}}
\|a - b\|.$$
\end{proof}

It is proved in \cite[Theorem 3.13]{BurKaMoPeRa} that the quadratic conorm, $\gamma^{q}(.)$, in a JB$^*$-triple $E$ is upper semi-continuous on $E\backslash \{0\}$. In the setting of extremally rich JB$^*$-triples we can characterize now the precise points at which $\gamma^{q}(.)$ is continuous.

\begin{thm}\label{t characterization of continuity points of conorm} Let $E$ be an extremally rich JB$^*$-triple. Then the quadratic conorm $\gamma^{q}(.)$ is continuous at a point $a\in E$ if, and only if, either $a$ is not von Neumann regular {\rm(}i.e. $\gamma^{q}(a)=0${\rm)} or $a$ is BP quasi-invertible.
\end{thm}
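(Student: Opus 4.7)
The proof naturally splits along the three possibilities for $a$: not von Neumann regular (equivalently $\gamma^q(a)=0$), BP quasi-invertible, or von Neumann regular but outside $E_q^{-1}$. The first two cases give continuity, the third gives discontinuity, and this is where extremal richness is needed.

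For the \emph{if} direction, when $\gamma^q(a)=0$ note first that $a\neq 0$ (since $\gamma^q(0)=\gamma(0)=\infty$), so Theorem 3.13 of \cite{BurKaMoPeRa} applies and $\gamma^q$ is upper semi-continuous at $a$. Combined with $\gamma^q\geq 0$ this forces $\lim_{b\to a}\gamma^q(b)=0=\gamma^q(a)$, hence continuity. When $a\in E_q^{-1}$, continuity at $a$ is immediate from Proposition \ref{p suficient conditions for continuity of seminorm}: inside the ball of radius $\gamma^q(a)^{1/2}$ around $a$ one has the explicit Lipschitz-type estimate $|\gamma^q(a)-\gamma^q(b)|<\gamma^q(a)^{1/2}\|a-b\|$, which yields continuity at $a$ (and this direction does not require extremal richness).

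For the \emph{only if} direction, suppose $a$ is von Neumann regular but $a\notin E_q^{-1}$; we must produce a sequence $b_n\to a$ with $\gamma^q(b_n)\not\to \gamma^q(a)$. Here extremal richness enters decisively: since $E_q^{-1}$ is norm-dense in $E$, choose $b_n\in E_q^{-1}$ with $b_n\to a$. Because each $b_n$ is BP quasi-invertible, the identity recalled before Proposition \ref{propo 4.4 in JamPeSidTah} gives $\gamma^q(b_n)=m_q(b_n)^2$. Using the hypothesis $a\in E\setminus E_q^{-1}$ and the definition of $m_q$ we obtain
\[
m_q(b_n)=\mathrm{dist}(b_n,E\setminus E_q^{-1})\leq \|b_n-a\|\longrightarrow 0,
\]
so $\gamma^q(b_n)\to 0$. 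On the other hand, $a$ being von Neumann regular yields $\gamma^q(a)=1/\|a^{\dagger}\|^{2}>0$ by \cite[Theorem 3.4]{BurKaMoPeRa}. Consequently $\gamma^q(b_n)\not\to\gamma^q(a)$, and $\gamma^q$ is discontinuous at $a$.

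The only mildly subtle point is the third case, but even there the machinery is already in place: the equality $\gamma^q=m_q^{2}$ on $E_q^{-1}$, the inequality $m_q(b)\leq\mathrm{dist}(b,E\setminus E_q^{-1})$, and the density of $E_q^{-1}$ in $E$ assemble the discontinuity almost mechanically. The main conceptual content is thus the realization that extremal richness forces $\gamma^q$ to ``jump'' from $0$ to the positive value $1/\|a^{\dagger}\|^{2}$ precisely at those von Neumann regular elements whose range tripotent fails to be complete, i.e.\ at $a\in E\setminus E_q^{-1}$ with $a^{\dagger}$ existing.
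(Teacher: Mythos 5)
Your proof is correct and follows essentially the same route as the paper: upper semi-continuity handles the non-regular case, Proposition \ref{p suficient conditions for continuity of seminorm} handles $a\in E_q^{-1}$, and norm-density of $E_q^{-1}$ settles the remaining case. The only difference is cosmetic: in that last case the paper assumes continuity and applies Lemma \ref{l 31} to a nearby BP quasi-invertible element to conclude $a\in E_q^{-1}$, whereas you argue the contrapositive directly, using the identity $\gamma^{q}=m_q^{2}$ on $E_q^{-1}$ (which subsumes Lemma \ref{l 31}) to force $\gamma^{q}(b_n)\to 0\neq \gamma^{q}(a)$ --- the same ingredients, repackaged.
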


\begin{proof} The upper semi-continuity of $\gamma^{q}(.)$ implies that it is continuous at every point $a\in E$ which is not von Neumann regular. If $a \in E^{-1}_{q}$, the continuity of $\gamma ^{q}(.)$ at $a$ follows from Proposition \ref{p suficient conditions for continuity of seminorm}.\smallskip

Suppose that $\gamma ^{q}(.)$ is continuous at $a$, and $a$ is von Neumann regular (i.e. $\gamma^{q}(a)>0$). In this case, $Q(a)(E)$ is norm closed, or equivalently, $\gamma^{q}(a) = \gamma(Q(a)) > 0$ (see \cite[Corollary 2.4 and proof of Theorem 3.4]{BurKaMoPeRa}). The mapping $x\mapsto \gamma^{q}(x)^{\frac12}$ is continuous at $a$. So, there exists $\delta > 0 $ such that $$\|a - b\| < \delta \Rightarrow |\gamma^{q}(a)^{\frac{1}{2}} - \gamma^{q}(b)^{\frac{1}{2}}| < \frac{\gamma^{q}(a)^{\frac{1}{2}}}{2},$$ that is, $ \gamma^{q}(b)^{\frac{1}{2}} > \frac{\gamma^{q}(a)^{\frac{1}{2}}}{2},$ whenever $\|a - b\| < \delta$. Extremally richness of $E$, implies that $\overline{E^{-1}_{q}} = E$. Thus, there is $c \in E^{-1}_{q}$ with $\|a - c\| < \min\{\delta, \frac{\gamma^{q}(a)^{\frac{1}{2}}}{2}\}$.  In particular $\|a - c\| < \delta$, that is, $\gamma^{q}(c)^{\frac{1}{2}} >  \frac{\gamma^{q}(a)^{\frac{1}{2}}}{2}
 > \|a - c\|$. Lemma \ref{l 31} above proves that $ a \in E^{-1}_{q}$.
 \end{proof}

\begin{remark}\label{r continuity points of the qnorm} In \cite[Remark 3.18]{BurKaMoPeRa} it is shown that the quadratic conorm $\gamma^q (.)$ of a JB$^*$-triple $E$ is continuous at every element $a\in E$ for which $Q(a)$ is left or right invertible in $B(E)$. It is also asked, in the just quoted remark, whether these points are the only non-trivial continuity points of $\gamma^q (.)$. Theorem \ref{t characterization of continuity points of conorm} characterizes the continuity points of the quadratic conorm in the class of extremally rich JB$^*$-triples (a class including all JBW$^*$-triples). Proposition \ref{p suficient conditions for continuity of seminorm} shows the existence of points $x$ satisfying that the quadratic conorm is continuous at $x$, but $Q(x)$ is not left nor right invertible. For example, when $E$ is an extremally rich JB$^*$-triple and $e$ is a complete tripotent with $E_1(e)\neq \{0\}$, then the quadratic conorm is continuous at $e$, but $Q(e)$ is not left nor right invertible.
\end{remark}

The arguments in the second part of the proof of Theorem \ref{t characterization of continuity points of conorm} are also valid to prove the following:

\begin{prop}\label{p necessary conds continuity qconorm} Let $(a_{n})$ be a sequence of BP quasi-invertible elements in a JB$^*$-triple $E$. Suppose that $(a_{n})$ converges in norm to some element $a$ in $E$, and $\gamma^q (a_n) \to \gamma^q (a)>0$. Then $a$ is BP quasi-invertible.$\hfill\Box$
\end{prop}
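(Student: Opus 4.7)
The plan is to observe that the proposition is essentially a reformulation of the second half of the proof of Theorem \ref{t characterization of continuity points of conorm}, with the role played there by extremal richness now being taken over by the hypothesis that the approximating sequence $(a_n)$ already lies in $E_q^{-1}$. The key tool is Lemma \ref{l 31}: if $c \in E_q^{-1}$ and $\|c - b\| < \gamma^{q}(c)^{1/2}$, then $b \in E_q^{-1}$. So it suffices to exhibit some index $n$ for which $c = a_n$ and $b = a$ satisfy this inequality.

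First, I would exploit the hypothesis $\gamma^{q}(a_n) \to \gamma^{q}(a) > 0$. By convergence, there exists $N_1 \in \mathbb{N}$ such that
$$\gamma^{q}(a_n) > \frac{\gamma^{q}(a)}{4}, \qquad \text{for all } n \geq N_1,$$
or equivalently $\gamma^{q}(a_n)^{1/2} > \gamma^{q}(a)^{1/2}/2$. Next, since $a_n \to a$ in norm, there exists $N_2 \in \mathbb{N}$ with
$$\|a_n - a\| < \frac{\gamma^{q}(a)^{1/2}}{2}, \qquad \text{for all } n \geq N_2.$$
Fix any $n_0 \geq \max\{N_1, N_2\}$. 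Combining the two estimates yields
$$\|a_{n_0} - a\| < \frac{\gamma^{q}(a)^{1/2}}{2} < \gamma^{q}(a_{n_0})^{1/2}.$$

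Since $a_{n_0} \in E_q^{-1}$ by hypothesis, Lemma \ref{l 31} (applied with $a_{n_0}$ in the role of $a$ and $a$ in the role of $b$) immediately yields $a \in E_q^{-1}$, completing the proof. There is no real obstacle here; the content is entirely contained in Lemma \ref{l 31}, and the proposition is the natural sequential version of the argument used at the end of Theorem \ref{t characterization of continuity points of conorm}, where extremal richness was invoked precisely to produce such an approximating element $c$ with $\gamma^{q}(c)^{1/2}$ bounded below by $\|a-c\|$.
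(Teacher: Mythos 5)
Your argument is correct and is exactly the one the paper intends: the paper proves this proposition by pointing to the second part of the proof of Theorem \ref{t characterization of continuity points of conorm}, where extremal richness only served to produce an element $c\in E_q^{-1}$ with $\gamma^{q}(c)^{1/2}>\|a-c\|$, a role your $a_{n_0}$ plays here before invoking Lemma \ref{l 31}.
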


Our next result is a consequence of \cite[Theorem 3.16 and Corollary 3.17]{BurKaMoPeRa} and the previous Proposition \ref{p necessary conds continuity qconorm}.

\begin{corollary}
Let $(a_{n})$ be a sequence of BP quasi-invertible elements in a JB$^*$-triple $E$. Suppose that $(a_{n})$ converges in norm to some element $a$ in $E$. Then the following assertions are equivalent:\begin{enumerate}[$(a)$]
\item $(a_n^{\dag})$ is a bounded sequence in $E;$
\item $\gamma^{q}(a_{n}) \to \gamma^{q}(a) > 0$;
\end{enumerate}
Furthermore, if any of the above statements holds, then $a$ is BP quasi-invertible and $\|a_n^{\dag}-a^{\dag}\|\to 0.$
\end{corollary}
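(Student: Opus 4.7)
The plan is to reduce both implications to the identity $\gamma^{q}(x)=\|x^{\dag}\|^{-2}$ (valid for every von Neumann regular $x$, and in particular for every BP quasi-invertible $x$) and then to import the two external ingredients: Theorem 3.16 / Corollary 3.17 of \cite{BurKaMoPeRa}, which handle convergence of generalized inverses, and Proposition \ref{p necessary conds continuity qconorm}, which upgrades von Neumann regularity of the limit to BP quasi-invertibility.

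For the implication $(b)\Rightarrow (a)$, I would simply observe that, since each $a_n$ is BP quasi-invertible, it is von Neumann regular, so $\|a_n^{\dag}\|^2=\gamma^{q}(a_n)^{-1}$. The hypothesis $\gamma^{q}(a_n)\to\gamma^{q}(a)>0$ then forces $\gamma^{q}(a_n)\geq\gamma^{q}(a)/2$ for all sufficiently large $n$, whence $\|a_n^{\dag}\|\leq\sqrt{2/\gamma^{q}(a)}$ eventually, proving boundedness of $(a_n^{\dag})$.

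For $(a)\Rightarrow(b)$, set $M:=\sup_n \|a_n^{\dag}\|<\infty$. Each $a_n$ is von Neumann regular and $a_n\to a$ in norm, so Theorem 3.16 of \cite{BurKaMoPeRa} applies and yields that $a$ is itself von Neumann regular with $a_n^{\dag}\to a^{\dag}$ in norm. Taking norms gives $\|a_n^{\dag}\|\to\|a^{\dag}\|\leq M$, and since $\gamma^{q}(x)=\|x^{\dag}\|^{-2}$ on von Neumann regular elements, we get $\gamma^{q}(a_n)\to\|a^{\dag}\|^{-2}=\gamma^{q}(a)$. The lower bound $\gamma^{q}(a_n)\geq 1/M^2>0$ passes to the limit, so $\gamma^{q}(a)>0$ as well.

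Finally, under either (and hence both) of the equivalent conditions, Proposition \ref{p necessary conds continuity qconorm} applied to the sequence $(a_n)\subset E_q^{-1}$ with $\gamma^{q}(a_n)\to\gamma^{q}(a)>0$ yields that $a\in E_q^{-1}$, and the norm convergence $\|a_n^{\dag}-a^{\dag}\|\to 0$ has already been extracted from Theorem 3.16 of \cite{BurKaMoPeRa} in the proof of $(a)\Rightarrow(b)$ (alternatively, Corollary 3.17 of that reference packages the two conclusions together). The only subtle step is the use of Theorem 3.16 of \cite{BurKaMoPeRa}: it is essential because in a general JB$^*$-triple the set of von Neumann regular elements is not closed, so the boundedness of $(a_n^{\dag})$ alone does not obviously transfer the regularity to the limit without this external continuity-type result.
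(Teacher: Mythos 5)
Your proof is correct and follows essentially the same route as the paper: both reduce the equivalence to Theorem 3.16 and Corollary 3.17 of \cite{BurKaMoPeRa} and obtain BP quasi-invertibility of the limit from Proposition \ref{p necessary conds continuity qconorm}. The only (harmless) deviation is that you prove $(b)\Rightarrow(a)$ directly from the identity $\gamma^{q}(x)=\|x^{\dag}\|^{-2}$ on regular elements, a slightly more elementary shortcut than the paper's appeal to Theorem 3.16$(c)\Rightarrow(d)$; just note that the regularity of the limit $a$ in your $(a)\Rightarrow(b)$ step comes from Corollary 3.17 (as you acknowledge parenthetically) rather than from Theorem 3.16 itself.
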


\begin{proof}$(a)\Rightarrow (b)$ Suppose $(a_n^{\dag})$ is a bounded sequence in $E.$ Corollary 3.17 in \cite{BurKaMoPeRa} implies that $a$ is von Neumann regular (i.e. $\gamma^{q}(a)>0$). It follows from \cite[Theorem 3.16]{BurKaMoPeRa}$(d)\Rightarrow (c)$ that $\gamma^{q}(a_{n}) \to \gamma^{q}(a)>0$.\smallskip

$(b)\Rightarrow (a)$ Suppose $\gamma^{q}(a_{n}) \to \gamma^{q}(a)>0$. In particular, $a$ is von Neumann regular. The desired statement follows from \cite[Theorem 3.16]{BurKaMoPeRa}$(c)\Rightarrow (d)$.\smallskip

The final statement is a consequence of Proposition \ref{p necessary conds continuity qconorm} and \cite[Theorem 3.16]{BurKaMoPeRa}.
\end{proof}

The result in Theorem \ref{t characterization of continuity points of conorm} is new, even in the case of C$^*$-algebras. R. Harte and M. Mbekhta introduce the notions of left and right conorms for C$^*$-algebras in \cite{HarMb2}.  According to their notation, the
left conorm, $\gamma(a),$ of an element $a$ in a C$^*$-algebra $A$ is given by
$$ \gamma(a)=\gamma^{left} (a)=\gamma(L_a)=\inf \left\{ \frac{\Vert a
x\Vert}{{\rm d}(x,\ker(L_a))} : x\not\in \ker(L_a)\right\},$$ where $L_a$ is the left multiplication mapping by $a$, that is, $L_a (x) = a x$ ($x\in A$). The right conorm is similarly defined. Theorem 4 in \cite{HarMb2} shows that $$\gamma (a)^2 = \gamma(aa^*) = \gamma (a^*a) =\gamma^{right} (a)^2 = \inf\{t : t\in \sigma(a a^*)\backslash \{0\}\}, $$ where $\sigma(a a^*)$ denotes the spectrum of $aa^*$.\smallskip

Harte and Mbekhta established that the conorm $\gamma(.)$ of a C$^*$-algebra is upper semi-continuous \cite[Theorem 7]{HarMb2}, they also show in \cite[Theorem 9]{HarMb2} that the conorm is always continuous at semi-invertible elements, and, by the upper semi-continuity of $\gamma(.)$, the conorm is continuous at elements with no generalized inverses (i.e. at elements $a$ with $\gamma (a)=0$). When $A= B(H)$, the C$^*$-algebra of all bounded linear operators on a complex Hilbert space $H$, then these results cover all continuity points. The general case is left as open problem. For a general C$^*$-algebra $A$, Corollary 4.1 in \cite{BurKaMoPeRa} proves that $\gamma^{q} (a) = \gamma (a)^2,$ for all $a\in A.$ Theorem \ref{t characterization of continuity points of conorm} particularizes in the following result, which provides an additional information to the problem left open by Harte and Mbekhta.

\begin{corollary} Let $A$ be an extremally rich C$^*$-algebra. Then the conorm of $A$ is continuous at a point $a\in A$ if, and only if, either $a$ is not von Neumann regular {\rm(}i.e. $\gamma(a)=0${\rm)} or $a$ is quasi-invertible.$\hfill\Box$
\end{corollary}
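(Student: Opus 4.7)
The plan is to deduce this corollary immediately from Theorem \ref{t characterization of continuity points of conorm} by a change of framework. First, I regard $A$ as a JB$^*$-triple with the triple product given in \eqref{eq triple product C*}. By Remark \ref{r tsr1}$(c)$, $A$ is extremally rich as a C$^*$-algebra precisely when it is extremally rich as a JB$^*$-triple, so Theorem \ref{t characterization of continuity points of conorm} is applicable to $A$ in its JB$^*$-triple structure.

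Next, I invoke the identity $\gamma^{q}(x)=\gamma(x)^{2}$ valid for every $x\in A$ (Corollary 4.1 of \cite{BurKaMoPeRa}, already recorded just before the statement of the corollary). Since the squaring map $t\mapsto t^{2}$ is a homeomorphism of $[0,\infty)$ onto itself, continuity of $\gamma^{q}(.)$ at $a$ is equivalent to continuity of $\gamma(.)$ at $a$. In particular, $\gamma^{q}(a)=0$ if and only if $\gamma(a)=0$, so the two ways of expressing ``$a$ is not von Neumann regular'' coincide.

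The remaining translation of vocabulary is straightforward: as observed at the end of the preliminaries, an element of a C$^*$-algebra is BP quasi-invertible in the JB$^*$-triple sense precisely when it is quasi-invertible in the sense of Brown and Pedersen \cite{BP95}. Combining these three identifications with Theorem \ref{t characterization of continuity points of conorm} yields the stated equivalence. There is no real obstacle in this argument; it is a routine transfer through the dictionary between the C$^*$-algebra and JB$^*$-triple notions, and the only point requiring any care is checking that the classes of ``non von Neumann regular'' and ``(BP) quasi-invertible'' elements agree under both viewpoints, which has already been recorded in the preliminary material.
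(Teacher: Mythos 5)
Your argument is correct and follows exactly the route the paper intends: the corollary is stated as an immediate consequence of Theorem \ref{t characterization of continuity points of conorm} together with the identity $\gamma^{q}(a)=\gamma(a)^{2}$ from \cite[Corollary 4.1]{BurKaMoPeRa}, the identification of BP quasi-invertibility with Brown--Pedersen quasi-invertibility, and Remark \ref{r tsr1}$(c)$. Your observation that squaring is a homeomorphism of $[0,\infty)$ cleanly justifies the transfer of continuity, so nothing is missing.
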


\end{document}